\documentclass[12pt]{amsart}
\usepackage{amssymb}
\usepackage{amsthm}
\usepackage{enumitem}
\newcommand{\R}{\mathbb R}
\newcommand{\E}{\mathbb E}

\newtheorem{definition}{Definition}
\newtheorem{theorem}{Theorem}
\newtheorem{lemma}{Lemma}

\theoremstyle{remark}
\newtheorem{remark}{Remark}
\begin{document}
\author{Tomasz Szostok}
\title{A generalization of a theorem of Brass and Schmeisser}
\address{Institute of Mathematics, University of Silesia in Katowice, 40-007 Bankowa 14, Katowice, Poland }
\email{tomasz.szostok@us.edu.pl}
\keywords{quadratures, inequalities, convex functions of higher orders, stochastic orders }
\subjclass[2010]{65D30, 26A51,  26D10, 39B62}
\maketitle
\begin{abstract}
Let $n$ be an odd positive integer.
It was proved by Brass and Schmeisser that 
for every quadrature $$\mathcal{Q}=\alpha_1f(x_1)+\dots+\alpha_mf(x_m),$$ (with positive weights) of order at least $n+1$ and for every $n-$convex function $f,$ the value of $Q$ on $f$ lies between the values of Gauss and Lobatto quadratures of order $n+1$ calculated for the same function $f$. We generalize this result in two directions, replacing 
$Q$ by an integral with respect to a given measure and allowing the number $n$ to any positive integer (for even $n$ Radau quadratures replace Gauss and Lobatto ones).

\end{abstract}
\maketitle
\section{Introduction}
Theorem 6 from the paper of Brass and Schmeisser \cite{BS}
states that for every odd number $n,$ the quadratures of Gauss and Lobatto of order $n+1$ are extremal  with respect to $n-$convex ordering
i.e. for every quadrature$$\mathcal{Q}=\alpha_1f(x_1)+\dots+\alpha_nf(x_m),$$
of the same order
and for every $n-$convex function $f,$ the value of  $Q$
lies between the values of the quadratures of Gauss and Lobatto (calculated for the same function $f$).

Clearly, every quadrature is associated with a discrete measure concentrated at the nodes of this quadrature.
Therefore, we provide a natural extension of the mentioned theorem, showing that the values of the quadratures of Gauss and Lobatto are extremal in the class of integrals with respect to any measure. The proof is very short and it relies 
on the result from a paper by  Denuit, Lefevre and Shaked
\cite{DLS} concerning stochastic orderings. In our approach the case of even $n$ is not much different, the only difference is that the quadratures  of Gauss and Lobatto are replaced by (left and right) Radau quadratures.

\section{Preliminaries}

In this paper $\mathcal{G}_n[a,b],\mathcal{L}_n[a,b],\mathcal{R}_n^l[a,b]\mathcal{R}_n^r[a,b]$
stand, respectively, for the:
$n-$ point Gauss quadrature operator on the interval $[a,b],$  $n-$point Lobatto quadrature operator on the interval $[a,b],$
$n-$point Radau quadrature operator on the interval $[a,b]$ containing the point $a$ as a node and 
$n-$point Radau quadrature operator on the interval $[a,b]$ containing the point $b$ as a node
(for details see \cite{WG},\cite{WL},\cite{WR}). For example
$$\mathcal{G}_2[a,b](f)=\frac{f\left(\frac{3-\sqrt{3}}{6}a+
	\frac{3+\sqrt{3}}{6}b\right)+f\left(\frac{3+\sqrt{3}}{6}a+
	\frac{3-\sqrt{3}}{6}b\right)}{2},$$
$$\mathcal{L}_3[a,b](f)=\frac16f(a)+\frac23f\left(
\frac{a+b}{2}\right)+\frac16f(b)$$
and 
$$\mathcal{R}^l_2[a,b](f)=\frac14f\left(a\right)+\frac34f\left(\frac{a+2b}{3}\right).$$

Now, we recall the definition of higher order convexity. To this end we need the notion of divided differences of higher orders. These expressions are defined by the following recurrent formulas:  
$$f[x_1]=f(x_1)$$
and
$$f[x_1,\dots,x_n]=\frac{f[x_1,\dots,x_{n-1}]-
	f[x_2,\dots,x_n]}{x_n-x_1}.$$
Let $I\subset\R$ be an interval. We say that function $f$ is convex  of order $n$ if $$f[x_1,\dots,x_{n+2}]\geq 0,\;x_1,\dots,x_{n+2}\in I.$$
Clearly, $0-$convexity means nondecreasingness and $1-$convexity is equivalent to the standard convexity.

In \cite{BP} Bessenyei and P\'ales  proved that for all $n-$convex functions $f$ with odd $n$ we have
\begin{equation}
\label{odd}
\mathcal{G}_{\frac{n+1}{2}}[a,b](f)\leq\frac{1}{b-a}\int_a^bf(x)dx\leq \mathcal{L}_{\frac{n+3}{2}}[a,b](f)
\end{equation}
whereas for even $n$ we have 
\begin{equation}
\label{even}
\mathcal{R}_{\frac{n+2}{2}}^l[a,b](f)\leq\frac{1}{b-a}\int_a^bf(x)dx\leq\mathcal{R}_{\frac{n+2}{2}}^r[a,b](f).
\end{equation}
Note that this result follows from the analysis of the remainder of the quadrature rules involved but such reasoning requires higher regularity of $f$ and, therefore, the result of Bessenyei and P\'ales is stronger than the one stemming from numerical analysis.
On the other hand, Brass and Schmeisser in \cite{BS} proved that 
for odd $n$ we have
\begin{equation}
\label{bsin}
\mathcal{G}_{\frac{n+1}{2}}[a,b](f)\leq\mathcal{Q}[a,b](f)\leq \mathcal{L}_{\frac{n+3}{2}}[a,b](f)
\end{equation}
for every quadrature $Q$ of order $n+1$ with positive weights.

We will generalize  both above results, using the approach connected with use of stochastic orderings. This method is was first used in the paper of   Rajba \cite{Rajba} where, among others, the connection of following Ohlin lemma with Hermite-Hadamard inequality was observed.
\begin{lemma} (Ohlin \cite{Ohlin})
	\label{Ohlin}
Let $X_1,X_2$ be two random variables such that $\E X_1=\E X_2$  and let  $F_1,F_2$ be their cumulative distribution functions.
If $F_1,F_2$ satisfy for some $x_0$ the following inequalities
$$F_1(x)\leq F_2(x)\;\textrm{if}\;x<x_0\;\;\textrm{and}\;\;F_1(x)\geq F_2(x)\;{ if}\;x>x_0 $$
then 
\begin{equation}
\label{m}
\E f(X_1)\leq \E f(X_2)
\end{equation}
 for all continuous and convex functions $f:\R\to\R.$
\end{lemma}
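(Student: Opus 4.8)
The plan is to reduce the inequality to the \emph{stop-loss order} between $X_1$ and $X_2$ and then to exploit the representation of a convex function as a nonnegative mixture of the ``angle'' functions $x\mapsto(x-t)_+$ and $x\mapsto(t-x)_+$.

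First I would record the elementary identities, valid for any random variable $X$ with finite mean and distribution function $F$,
$$\E(X-t)_+=\int_t^{\infty}\bigl(1-F(x)\bigr)\,dx,\qquad\E(t-X)_+=\int_{-\infty}^t F(x)\,dx,\qquad\E X=\int_0^{\infty}\bigl(1-F(x)\bigr)\,dx-\int_{-\infty}^0 F(x)\,dx.$$
From the last identity and the assumption $\E X_1=\E X_2$ one gets $\int_{\R}\bigl(F_1(x)-F_2(x)\bigr)\,dx=0$, so that the function
$$H(t):=\int_t^{\infty}\bigl(F_1(x)-F_2(x)\bigr)\,dx=\int_{-\infty}^t\bigl(F_2(x)-F_1(x)\bigr)\,dx$$
is well defined (the two expressions coincide precisely because that total integral vanishes), and by the first two identities $H(t)=\E(X_2-t)_+-\E(X_1-t)_+=\E(t-X_2)_+-\E(t-X_1)_+$. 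The single-crossing hypothesis then yields $H\ge 0$ on all of $\R$: if $t\ge x_0$ the integrand $F_1-F_2$ is nonnegative throughout $[t,\infty)$, while if $t\le x_0$ the integrand $F_2-F_1$ is nonnegative throughout $(-\infty,t]$ (the single value $x_0$ does not affect the integral).

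Next I would invoke the representation of a finite convex function $f$ on $\R$ around the point $x_0$,
$$f(x)=f(x_0)+f'_+(x_0)(x-x_0)+\int_{(x_0,\infty)}(x-t)_+\,d\nu(t)+\int_{(-\infty,x_0]}(t-x)_+\,d\nu(t),$$
where $\nu$ is the nonnegative second-derivative measure of $f$; alternatively one may note that $f$ is the pointwise limit of an increasing sequence of piecewise linear convex functions, each a nonnegative combination of an affine function and finitely many angle functions, and pass to the limit by monotone convergence at the very end. Integrating this representation against $F_2$ and against $F_1$, interchanging expectation and $\nu$-integral by Tonelli, and subtracting, the affine part contributes $f'_+(x_0)(\E X_2-\E X_1)=0$, and what survives is
$$\E f(X_2)-\E f(X_1)=\int_{(x_0,\infty)}\bigl(\E(X_2-t)_+-\E(X_1-t)_+\bigr)\,d\nu(t)+\int_{(-\infty,x_0]}\bigl(\E(t-X_2)_+-\E(t-X_1)_+\bigr)\,d\nu(t)=\int_{\R}H(t)\,d\nu(t)\ge 0,$$
because $\nu\ge 0$ and $H\ge 0$. (If $\E f(X_2)$ is finite the same computation shows that $\E f(X_1)$ is finite too, so the subtraction is legitimate; if $\E f(X_2)=+\infty$ there is nothing to prove.)

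The only genuinely delicate points are technical: the handling of possibly infinite expectations and the justification of the convex-function representation together with the Tonelli interchange. The combinatorial core of the argument — that single crossing of the distribution functions plus equality of the means forces $H\ge 0$ — is the one-line computation above, and that is the only place where the hypotheses are actually used. An essentially equivalent route, which I would mention as an alternative, is to write $\E f(X_1)-\E f(X_2)=-\int_{\R}f'_+(x)\bigl(F_1(x)-F_2(x)\bigr)\,dx$ via Stieltjes integration by parts (the boundary terms vanishing since $x\mapsto f(x)\bigl(1-F_i(x)\bigr)$ and $x\mapsto f(x)F_i(x)$ tend to $0$ when $\E f(X_i)<\infty$), and then to estimate $\int_{\R}f'_+(x)\bigl(F_1(x)-F_2(x)\bigr)\,dx\ge f'_+(x_0)\int_{\R}\bigl(F_1(x)-F_2(x)\bigr)\,dx=0$ using that $f'_+$ is nondecreasing and that $F_1-F_2$ changes sign only at $x_0$.
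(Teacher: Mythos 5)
Your argument is correct. Note, however, that the paper does not prove this lemma at all: it is quoted verbatim as Ohlin's result from \cite{Ohlin} and then used as a black box, so there is no in-paper proof to compare yours against. What you have written is the standard ``stop-loss order'' proof: the identity $\E f(X_2)-\E f(X_1)=\int_{\R}H(t)\,d\nu(t)$ with $H(t)=\int_t^{\infty}\bigl(F_1-F_2\bigr)$ and $\nu$ the second-derivative measure of $f$, together with the observation that equality of means plus single crossing forces $H\ge 0$. All the steps check out: the two expressions for $H$ agree because $\int_{\R}(F_1-F_2)=0$; the sign of $H$ follows by splitting at $x_0$; the mixture representation of $f$ around $x_0$ is valid with the atom of $\nu$ at $x_0$ assigned to the left-hand integral when the affine slope is taken to be $f'_+(x_0)$; Tonelli applies since all integrands are nonnegative; and your handling of possibly infinite $\E f(X_i)$ (well-definedness in $(-\infty,+\infty]$ via the affine lower bound, and finiteness of $\E f(X_1)$ whenever $\E f(X_2)<\infty$ via $H\ge0$ termwise) closes the only real gap. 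The integration-by-parts alternative you sketch at the end is likewise sound and is arguably closer in spirit to how such single-crossing results are usually verified in the quadrature literature; either route would serve as a self-contained substitute for the citation. In the context of this paper the random variables are supported on $[a,b]$, so the integrability caveats you are careful about are automatic, and the continuity hypothesis in the statement is redundant for convex functions on all of $\R$.
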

Ohlin lemma and its generalization called Levin-Stechkin theorem  was used recently  in  a number papers (see for example \cite{MM}, \cite{NR}, \cite{NR1}, \cite{OSz}, \cite{Rajba},
\cite{R1}, \cite{R2}, \cite{SzA}, \cite{Szostok2}, \cite{SzostokL},  \cite{SzostokCPAA}).

However, we are interested in inequalities satisfied by higher order convex functions. Therefore, we will use a result proved by  Denuit, Lefevre and  Shaked in \cite{DLS}. First we need the definition of the number 
of crossing points of two given functions.
\begin{definition}
	Let  $F,G:[a,b]\to\R$ be given functions.
	We say that cross exactly $s-$times, at points $x_1<\dots<x_s\in[a,b]$ if 
	$$(-1)^i(F(t)-G(t))\leq 0;\;t\in(x_i,x_{i+1}),i=1,\dots,s$$
	and for each $i$ there exists a point $t_i$
	such that $(-1)^i(F(t_i)-G(t_i))<0$
	or 
	$$(-1)^{i+1}(F(t)-G(t))\leq 0,t\in(x_i,x_{i+1})$$
	and for each $i$ there exists a point $t_i$
	such that $(-1)^{i+1}(F(t_i)-G(t_i))<0.$
\end{definition}
Now, the theorem follows.
\begin{theorem}[\cite{DLS}]
\label{DLS} Let $X$ and $Y$ be two random variables such that
$$
\E(X^j-Y^j)=0,\;\;j=1,2,\dots,s
$$
If the distribution functions $F_X,F_Y$  cross exactly $s-$times, at points $x_1,\dots,x_s$ and 
$$
(-1)^{s+1}(F_Y(t)-F_X(t))\leq 0\;\;for\;all\;t\in[a,x_1]
$$
then
$$\E f(X)\leq\E f(Y)$$
for all $s-$convex functions $f:\R\to\R.$
\end{theorem}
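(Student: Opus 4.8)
The plan is to reduce the conclusion to a sign condition on a single iterated integral of $F_Y-F_X$, and then to extract that sign condition from the crossing hypothesis by induction.

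First I would establish the integral representation underlying the $s$-convex order on a compact interval. Writing $\Phi_0:=F_Y-F_X$ and $\Phi_k(t):=\int_a^t\Phi_{k-1}(u)\,du$ for $k\geq 1$, repeated integration by parts (with a little care at the endpoints, where $X$ or $Y$ may carry atoms) gives, for $f\in C^{s+1}$,
\[
\E f(Y)-\E f(X)=\int_{[a,b]}f\,d(F_Y-F_X)=(-1)^{s+1}\int_a^b f^{(s+1)}(t)\,\Phi_s(t)\,dt,
\]
provided the boundary terms produced along the way, which equal $\Phi_1(b),\dots,\Phi_s(b)$ up to nonzero constants, all vanish. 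A Fubini computation rewrites $\Phi_k(b)$ as $\frac1{(k-1)!}\int_a^b(b-u)^{k-1}\Phi_0(u)\,du$, so the vanishing of $\Phi_1(b),\dots,\Phi_s(b)$ is exactly equivalent to $\E(X^j-Y^j)=0$ for $j=1,\dots,s$, which is precisely the hypothesis. Since an $s$-convex $f$ has $f^{(s+1)}\geq 0$, the claimed inequality follows once $\Phi_s$ is shown to have the appropriate constant sign on all of $[a,b]$.

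The core of the proof is therefore a purely real-variable statement: if $\Phi_0=F_Y-F_X$ crosses zero exactly $s$ times on $[a,b]$, is anchored by the prescribed sign on $[a,x_1]$, and satisfies $\Phi_1(b)=\dots=\Phi_s(b)=0$, then $\Phi_s$ is single-signed, with the sign determined by the anchoring condition. I would argue by induction on $k$: each $\Phi_k$ with $1\leq k\leq s$ vanishes at $a$ and at $b$ and has derivative $\Phi_{k-1}$, and a Rolle-type count shows that passing from $\Phi_{k-1}$ to $\Phi_k$ strictly decreases the number of interior sign changes — intuitively, the integration absorbs the leftmost crossing while preserving the alternating pattern — so that after $s$ steps no sign change survives. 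Propagating the signs through this induction fixes which constant sign $\Phi_s$ carries. Making this counting rigorous — in particular verifying that under the endpoint constraints each integration removes exactly one crossing, and tracking the sign correctly — is the step I expect to be the main obstacle; the remainder is bookkeeping.

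Finally, to drop the smoothness assumption on $f$ I would either mollify $f$ from below (convolution with a smooth nonnegative kernel preserves $s$-convexity on a slightly smaller interval, after which one lets the interval grow back), or, more transparently, invoke the representation of an $s$-convex function on $[a,b]$ as an increasing limit of nonnegative combinations of truncated powers $x\mapsto(x-c)_+^{s}$, $c\in(a,b)$, together with a polynomial of degree at most $s$. The moment hypotheses annihilate the polynomial part, and for each truncated power the required inequality reduces, through the representation above, to the assertion that $\Phi_s(c)$ has the correct sign — a special case already in hand.
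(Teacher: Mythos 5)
The paper itself offers no proof of this statement: Theorem~\ref{DLS} is imported verbatim from \cite{DLS}, so there is nothing internal to compare against. Your strategy --- pass to the iterated antiderivatives $\Phi_k$ of $F_Y-F_X$, use the moment conditions to kill the boundary terms, and show by a crossing count that $\Phi_s$ is single-signed --- is the classical route to this result, and your first block is sound: the identity $\E f(Y)-\E f(X)=(-1)^{s+1}\int_a^b f^{(s+1)}(t)\Phi_s(t)\,dt$ holds for smooth $f$ once $\Phi_1(b)=\dots=\Phi_s(b)=0$, and the Fubini computation $\Phi_k(b)=\frac{1}{k!}\left(\E(b-X)^k-\E(b-Y)^k\right)$ shows these vanishings are triangularly equivalent to the moment hypotheses. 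The final reduction to non-smooth $f$ via truncated powers $(x-c)_+^s$ is also standard and unproblematic.

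The two steps you explicitly postpone are, however, exactly where the content lies, and the second one fails for the statement as printed. (i) The counting lemma is not hard, but you only assert it; the correct formulation is: if $\Phi_{k-1}$ has $m\ge 1$ sign changes, then $\Phi_k$ is piecewise monotone with alternating monotonicity on the $m+1$ resulting subintervals, starts at $\Phi_k(a)=0$, and ends at $\Phi_k(b)=0$, so its last monotone piece cannot change sign and $\Phi_k$ has at most $m-1$ sign changes. (ii) The sign propagation comes out backwards. On $(a,x_1)$ the stated hypothesis forces $F_Y-F_X$ to have sign $(-1)^{s}$, each $\Phi_k$ inherits near $a$ the first nonzero sign of $\Phi_0$, so the single-signed $\Phi_s$ carries the sign $(-1)^{s}$ throughout, whence $(-1)^{s+1}\int_a^b f^{(s+1)}\Phi_s\le 0$: your argument, completed faithfully, proves $\E f(Y)\le \E f(X)$, the reverse of the claim. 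This is not a defect of your method but of the transcription of the theorem: already for $s=1$ the displayed hypothesis contradicts Lemma~\ref{Ohlin} stated just above it, and taking $X$ uniform on $[0,1]$, $Y\equiv 1/2$ (one crossing at $1/2$, with $F_Y-F_X\le 0$ on $[0,1/2]$ as required) the asserted conclusion would give $\int_0^1x^2\,dx=1/3\le 1/4$, which is false. The hypothesis under which your plan closes is $(-1)^{s+1}(F_Y(t)-F_X(t))\ge 0$ on $[a,x_1]$. Since you deferred precisely the sign bookkeeping, you did not detect this; a complete proof must either carry it out and flag the misprint, or it proves a false statement.
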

Note that this theorem may be called a higher order counterpart of Ohlin lemma, since in Ohlin lemma 
we had one crossing point and, as a result,  an inequality valid for convex functions.

In our proof we will also need a result from another paper by Bessenyei and P\'ales, \cite{BesPal}.
Before we present it, we need to introduce some notation: if $I\subset\R$
then $\Delta(I)$ and $D(I)$ are defined by the formulas:   $$\Delta(I):=\{(x,y)\in I^2:x\leq y\}$$
and 
$$D(I):=\{(x,x):x\in I\}.$$
Let $\mu$ be a
a non-zero bounded Borel measure on the interval $[0,1]$, then its moments $\mu_n$ are defined by the usual formulas,
$$\mu_n:=\int_0^1t^nd\mu(t),\;n=0,1,2,\dots\;.$$
The main results of \cite{BesPal} is given by the following theorem.
\begin{theorem}[\cite{BesPal}]
	\label{BP} Let $I\subset\R$ be an open interval 	let $\Omega\subset\Delta(I)$ be an open subset
	containing the diagonal $D(I)$ of $I\times I$ and let $\mu$ be a non-zero bounded Borel measure
	on $[0,1]$. Assume that $n$ is the smallest non-negative integer such that $\mu_n\neq 0.$ If
	$f:I\to\R$ is a continuous function satisfying the integral inequality 
	$$\int_0^1f(x+t(y-x))d\mu(t)\geq 0$$
	then $\mu_nf$ is $(n-1)-$convex.
\end{theorem}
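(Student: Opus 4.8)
\emph{Proof strategy.} The plan is to establish the conclusion first for smooth $f$ by a Taylor expansion along the diagonal, and then to reach a general continuous $f$ by mollification together with an exhaustion of $I$ by compact subintervals.

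First I would treat the case $f\in C^n(I)$. Fix $x\in I$ and write $h=y-x>0$; since $\Omega$ is a (relative) neighbourhood of $D(I)$, the integral inequality holds for all sufficiently small $h>0$. Expanding $f(x+th)$ by Taylor's formula with remainder and integrating against $d\mu(t)$, the contributions of orders $0,1,\dots,n-1$ vanish because $\mu_0=\dots=\mu_{n-1}=0$, leaving
$$\int_0^1 f(x+t(y-x))\,d\mu(t)=\frac{\mu_n}{n!}\,f^{(n)}(x)\,h^n+o(h^n)\qquad(h\to 0^+).$$
Dividing by $h^n>0$ and letting $h\to 0^+$ then forces $\mu_n f^{(n)}(x)\ge 0$; as $x\in I$ is arbitrary this gives $(\mu_n f)^{(n)}\ge 0$ on $I$, which for a $C^n$ function is equivalent to $\mu_n f$ being $(n-1)$-convex. (If $n=0$ the same computation uses only continuity of $f$ and boundedness of $\mu$ and yields $\mu_0 f\ge 0$.)

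Next I would remove the smoothness hypothesis. Fix a compact subinterval $[a_1,b_1]\subset I$, put $J=(a_1,b_1)$, let $\rho_\delta\ge 0$ be a standard mollifier supported in $[-\delta,\delta]$, and set $f_\delta=f*\rho_\delta$; for small $\delta$ this is smooth near $\overline J$ and converges to $f$ uniformly on $[a_1,b_1]$. The point to check is that $f_\delta$ still satisfies the integral inequality near the diagonal of $J$. By Fubini,
$$\int_0^1 f_\delta(x+t(y-x))\,d\mu(t)=\int_{-\delta}^{\delta}\Bigl(\int_0^1 f\bigl((x-s)+t((y-s)-(x-s))\bigr)\,d\mu(t)\Bigr)\rho_\delta(s)\,ds,$$
and the inner integral is nonnegative provided $(x-s,y-s)\in\Omega$ for every $|s|\le\delta$. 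Since $\Omega$ is open in $\Delta(I)$ and contains the compact diagonal segment over a slight enlargement of $[a_1,b_1]$, a compactness argument supplies $\eta>0$ and $\delta_0>0$ such that this holds whenever $x\in[a_1,b_1]$, $0\le y-x\le\eta$, and $\delta\le\delta_0$; thus $f_\delta$ satisfies the integral inequality on a set open in $\Delta(J)$ and containing $D(J)$. Applying the first step on the interval $J$ to the smooth function $f_\delta$ shows $\mu_n f_\delta$ is $(n-1)$-convex on $J$. Letting $\delta\to 0$ and using that divided differences depend continuously on the function, $\mu_n f[x_1,\dots,x_{n+1}]\ge 0$ for all $x_1<\dots<x_{n+1}$ in $J$; since every finite tuple of points of $I$ lies in such a $J$, $\mu_n f$ is $(n-1)$-convex on $I$.

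The routine parts here are the Taylor estimate and the limiting argument. I expect the only genuine obstacle to be the bookkeeping in the mollification step: because $\Omega$ is merely assumed to contain a relative neighbourhood of the diagonal inside $\Delta(I)$, one must choose the smoothing radius $\delta$ and the strip width $\eta$ simultaneously and small enough that all shifted arguments $(x-s,y-s)$ stay in $\Omega$. The signedness of $\mu$ causes no difficulty, since the $d\mu$-integration sits inside the nonnegative averaging $\int\rho_\delta$.
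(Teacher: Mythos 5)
This theorem is quoted in the paper from Bessenyei and P\'ales \cite{BesPal} and is given there without proof, so there is no in-paper argument to compare yours against; I can only judge your reconstruction on its own. It is correct, and it is the natural route: for $f\in C^n$ the Taylor expansion with Peano remainder, integrated against $d\mu$, kills the terms of order $0,\dots,n-1$ because those moments vanish, and the remainder contributes $o(h^n)$ uniformly in $t\in[0,1]$ because $\mu$ has finite total variation; dividing by $h^n$ gives $\mu_n f^{(n)}\geq 0$, which for a $C^n$ function is equivalent to $(n-1)$-convexity via the mean value theorem for divided differences. The passage to merely continuous $f$ by convolution is also sound: the key observation that $f_\delta$ inherits the integral inequality because the Fubini exchange expresses $\int_0^1 f_\delta(x+t(y-x))\,d\mu(t)$ as a nonnegative $\rho_\delta$-average of the original functional evaluated at translated pairs $(x-s,y-s)$ is exactly the point that needs care (the signed measure $\mu$ sits inside the positive averaging), and your tube-lemma bookkeeping to keep $(x-s,y-s)$ in $\Omega$ for $x$ in a compact subinterval, $0\leq y-x\leq\eta$ and $|s|\leq\delta_0$ is the right fix. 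The only items I would ask you to spell out in a final write-up are the uniformity of the Peano remainder over $t\in[0,1]$ (so that the $o(h^n)$ survives integration against $|\mu|$), the separate treatment of $n=0$ (where $(-1)$-convexity means $f\geq 0$ and only dominated convergence is needed), and the remark that divided differences over fixed distinct nodes are finite linear combinations of function values, so they pass to the uniform limit $f_\delta\to f$. None of these is a gap, just detail.
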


\section{Main result}

In this section, following an idea from \cite{SzostokCPAA}, we will prove the
main result of the paper.
\begin{theorem}
\label{mainth}
Let $a,b\in \R, a<b$ and let 
$\mu$ be a probability measure on $\mathcal{B}([a,b])$ such that
\begin{equation}
\label{momenty}
\int_a^bx^kd\mu(x)=\frac{1}{b-a}\int_a^bx^kdx,\;k=1,\dots,n.
\end{equation}
If $n$ is odd then  the inequality
\begin{equation}
\label{muodd}
\mathcal{G}_{\frac{n+1}{2}}[a,b](f)\leq\int_a^bf(x)d\mu(x)\leq \mathcal{L}_{\frac{n+3}{2}}[a,b](f)
\end{equation}
is satisfied for all $n-$convex functions
whereas for even $n$ we have 
\begin{equation}
\label{mueven}
\mathcal{R}_{\frac{n+2}{2}}^l[a,b](f)\leq\int_a^bf(x)d\mu(x)\leq\mathcal{R}_{\frac{n+2}{2}}^r[a,b](f).
\end{equation}
 for all $n-$convex functions $f.$
\end{theorem}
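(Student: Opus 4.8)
\emph{Strategy.} For each of the four boundary quadratures the corresponding estimate will be deduced from Theorem~\ref{DLS}, applied to $\mu$ and to the discrete probability measure $\nu$ carried by the quadrature (which of the two is ``$X$'' and which ``$Y$'' is dictated by the direction of the inequality in question). Since each of the operators $\mathcal{G}_{\frac{n+1}{2}}$, $\mathcal{L}_{\frac{n+3}{2}}$, $\mathcal{R}^{l}_{\frac{n+2}{2}}$, $\mathcal{R}^{r}_{\frac{n+2}{2}}$ is exact on polynomials of degree at most $n$, condition \eqref{momenty} forces $\mu$ and $\nu$ to have equal moments of orders $0,1,\dots,n$; equivalently, if $X\sim\mu$ and $Y\sim\nu$ then $\E(X^{j}-Y^{j})=0$ for $j=1,\dots,n$, which is precisely the moment hypothesis of Theorem~\ref{DLS} with $s=n$. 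It therefore remains to check the two geometric hypotheses of that theorem: that the distribution functions $F_{\mu}$ and $F_{\nu}$ cross exactly $n$ times, and that their difference has the prescribed sign on the initial interval $[a,x_{1}]$.

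\emph{The crossing count} is the heart of the matter. The function $F_{\mu}$ is nondecreasing, whereas $F_{\nu}$ is a step function whose jumps occur at the $\frac{n+1}{2}$, $\frac{n+3}{2}$ or $\frac{n+2}{2}$ nodes of the quadrature. On an interval between consecutive nodes $F_{\nu}$ is constant, so there $F_{\mu}-F_{\nu}$ is monotone and changes sign at most once; at a node $F_{\mu}-F_{\nu}$ drops, and so may change sign once more. A case-by-case count of these contributions, exploiting that neither endpoint can carry a sign change, gives at most $2\cdot\frac{n+1}{2}-1=n$ sign changes for Gauss, $2\cdot\frac{n+3}{2}-3=n$ for Lobatto, and $2\cdot\frac{n+2}{2}-2=n$ for each Radau rule --- which is exactly why those particular orders occur. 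On the other hand, the number of crossings cannot be strictly smaller than $n$: the signed measure $\mu-\nu$ has $n+1$ vanishing moments, so the function $G(t):=\int_{a}^{b}(x-t)_{+}^{n}\,d(\mu-\nu)(x)$ vanishes together with its first $n-1$ derivatives at both endpoints, while $G^{(n)}$ is a constant multiple of $F_{\mu}-F_{\nu}$; an iterated integration argument then shows that fewer than $n$ sign changes of $F_{\mu}-F_{\nu}$ would force $G\equiv0$, that is $\mu=\nu$, in which case the inequality in question holds with equality. Thus, away from this trivial case, $F_{\mu}$ and $F_{\nu}$ cross exactly $n$ times.

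\emph{The sign near $a$, and conclusion.} If the quadrature does not use $a$ as a node --- Gauss in the lower bound of \eqref{muodd}, the right Radau rule in the upper bound of \eqref{mueven} --- then $F_{\nu}$ vanishes on a right neighbourhood of $a$ while $F_{\mu}\ge0$, so the sign condition of Theorem~\ref{DLS} holds automatically. If the quadrature loads $a$ --- Lobatto in the upper bound of \eqref{muodd}, the left Radau rule in the lower bound of \eqref{mueven} --- one needs $\mu(\{a\})$ not to exceed the weight the quadrature assigns to $a$; this follows by evaluating the identity $\int P\,d\mu=\int P\,d\nu$ (valid since $\deg P\le n$) on a polynomial $P$ of degree $n$ that has constant sign on $[a,b]$, vanishes at every node other than $a$, and is nonzero at $a$, for then the right-hand side equals $P(a)$ times the weight at $a$ while the left-hand side is, by the constant sign of $P$, bounded by $P(a)\,\mu(\{a\})$. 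With the count and the sign verified, Theorem~\ref{DLS} yields each of the inequalities in \eqref{muodd} and \eqref{mueven} for all $n$-convex~$f$. The only residual point --- that a general $n$-convex function may be irregular at $a$ or $b$ --- is dealt with by first establishing the estimates for smooth $n$-convex functions, equivalently for the generating functions $x\mapsto(x-t)_{+}^{n}$, and then passing to the limit, Theorem~\ref{BP} providing the tool that legitimises this reduction. The genuinely delicate ingredient is the crossing analysis of the second paragraph: pinning the number of sign changes of $F_{\mu}-F_{\nu}$ down to exactly $n$, and its leading sign down to the one admissible in Theorem~\ref{DLS}.
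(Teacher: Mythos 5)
Your proposal is correct and shares the paper's skeleton --- identify each boundary quadrature with a discrete measure whose first $n$ moments agree with those of $\mu$, bound the number of crossings of the two distribution functions by $n$ via the same node-and-gap count, and conclude with Theorem~\ref{DLS} --- but it diverges from the paper in the one genuinely delicate step, namely showing that the number of crossings is \emph{exactly} $n$. The paper argues indirectly: if there were only $k<n$ crossings, Theorem~\ref{DLS} would give an inequality valid for all $k$-convex functions, while Theorem~\ref{BP} forces any function satisfying that inequality to be $l$-convex for some $l\geq n$, a contradiction. You instead argue directly: the vanishing of the moments of $\mu-\nu$ up to order $n$ makes $F_\mu-F_\nu$ orthogonal to all polynomials of degree less than $n$, so fewer than $n$ sign changes would force $F_\mu-F_\nu\equiv 0$, i.e.\ $\mu=\nu$ and trivial equality. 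Your route is self-contained, avoids Theorem~\ref{BP} altogether, and handles cleanly the degenerate case in which \emph{all} moments of $\mu$ and $\mu_G$ coincide (where the paper's choice of ``the largest $l$ with equal moments'' is not available). You also verify the initial sign condition of Theorem~\ref{DLS} --- automatic when $a$ is not a node, and reduced, when it is (Lobatto, left Radau), to the inequality $\mu(\{a\})\leq$ (weight at $a$) via a nonnegative degree-$n$ polynomial vanishing at the remaining nodes --- a hypothesis of Theorem~\ref{DLS} that the paper does not check at all. The one blemish is your closing sentence: the appeal to Theorem~\ref{BP} to ``legitimise'' a smoothing limit is misplaced (that theorem characterises higher-order monotonicity, it is not an approximation tool), but the point is moot since Theorem~\ref{DLS} already applies to arbitrary $n$-convex functions.
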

\begin{proof}
As it was mentioned, we will prove this theorem using the method from \cite{SzostokCPAA}.
We start with the first inequality from \eqref{muodd}. Thus we consider the measure 
$$\mu_G=\alpha_1\delta_{x_1}+\dots+\alpha_n\delta_{x_\frac{n+1}{2}}$$
where $\delta_{x_i}$ is a probabilistic measure concentrated at the point $x_i,$
and $\alpha_i,x_i$ are, respectively, weights and nodes of $\mathcal{G}_{\frac{n+1}{2}}[a,b].$
The  quadrature $\mathcal{G}_{\frac{n+1}{2}}[a,b]$
is exact for the monomials of degree not greater than $n,$
i.e.
\begin{equation}
\label{intint}
\int_a^bx^kd\mu_G(x)=\frac{1}{b-a}\int_a^bx^kdx,\;k=1,2,\dots,n.
\end{equation}
Let  $\mu$ satisfy the assumptions of the theorem and let $F$ be the cumulative distribution function connected with  $\mu.$
Further, let $G$ be the cumulative distribution function connected with  $\mu_G$
i.e 
$$G(x)=\left\{\begin{array}{ll}
0&x\in[a,x_1),\\
\alpha_1&x\in[x_1,x_2),\\
\alpha_1+\alpha_2&x\in[x_1,x_2),\\
\vdots\\
1&x>x_{\frac{n+1}{2}}.
\end{array}\right.$$
Observe that the maximal number of crossing points of $F$ and $G$  is equal to $n.$  
Indeed, the first possible crossing point is $x_1.$ Then functions $F$ and $G$ may cross 
at most once in the interval $(x_1,x_2).$ If there were two crossing points in this interval we would have a contradiction with the fact that $F$ is nondecreasing.
Then $F$ and $G$ may cross in the point $x_2$ and so on. Altogether we have $\frac{n+1}{2}$
points and $\frac{n-1}{2}$ intervals and, consequently,
at most
$$\frac{n+1}{2}+\frac{n-1}{2}=n$$
crossing points.

Thus we have an upper estimation of the number of crossing points.  In order to use Theorem \ref{DLS}, it remains to show  that 
the number of crossing points cannot be smaller than $n.$
For the indirect proof, assume that this number equals $k$ for some  $k<n.$ Since, we have $k$ crossing points and $k$ first moments of $\mu$ and $\mu_G$ coincide, we may use Theorem \ref{DLS} and we infer that inequality 
\begin{equation}
\label{contr}
\mathcal{G}_{\frac{n+1}{2}}[a,b](f)\leq\int_a^bf(x)d\mu(x)
\end{equation}
is true for all $k-$convex functions. However, in view of 
\eqref{momenty} and \eqref{intint}, we know that
for some $l\geq n$ 
$$\int_a^bx^kd\mu(x)=\int_a^bx^kd\mu_G(x),\;k=1,\dots,l.$$
and, 
$$\int_a^bx^{l+1}d\mu(x)\neq\int_a^bx^{l+1}d\mu_G(x).$$
Thus from  Theorem \ref{BP}, we 
obtain that every function satisfying \eqref{contr} is $l-$convex.
In consequence,  every $k-$convex function is $l-$convex.
This contradiction shows that the number of crossing points of $F$ and $G$ 
is equal to $n$ and to finish the proof of  it is enough to use Theorem \ref{DLS}.

The proof of 
$$\int_a^bf(x)d\mu(x)\leq \mathcal{L}_{\frac{n+3}{2}}[a,b](f)$$
runs similarly. Let $L$ be the CDF connected with the quadrature $\mathcal{L}_{\frac{n+3}{2}}[a,b].$
The functions $F$ and $L$ can cross in each  node lying in the open interval $(a,b),$ there are ${\frac{n-1}{2}}$
such points.
Further functions $F$ and $L$ can cross at most once  in each interval lying between two consecutive nodes.

In total we have no more than 
$$\frac{n-1}{2}+\frac{n+1}{2}=n$$
crossing points. A similar reasoning as in the previous cases 
shows that there are exactly $n$ crossing points and the 
inequality \eqref{muodd} follows from Theorem \ref{DLS}.

Finally, in the case of even $n$ and of Radau quadratures $$\mathcal{R}_{\frac{n+2}{2}}^l[a,b](f),
 \mathcal{R}_{\frac{n+2}{2}}^r[a,b](f)$$ we have $\frac{n}{2}$
inner nodes and $\frac{n}{2}$ intervals giving the same number 
of possible crossing points. Again, similarly as before, there are exactly $n$ crossing points and the inequality \eqref{mueven}
follows from Theorem \ref{DLS}.
\end{proof}
We end this part of the paper with some simple observations stating that the condition \eqref{momenty} is, in fact, equivalent to the inequalities \eqref{muodd},\eqref{mueven}.
\begin{remark}
\label{remnec}
Theorem \ref{mainth} cannot be further generalized, since condition \eqref{momenty} is necessary if we want any 
of inequalities \eqref{muodd},\eqref{mueven} to be satisfied by all $n-$convex functions.
Indeed, functions of the form $$x\mapsto x,x\mapsto x^2,\dots,x\mapsto x^n$$ are both $n-$convex and $n-$concave. If an inequality of the form 
$$\int_a^bfd\mu\leq\int_a^bfd\nu$$
holds for all $n-$convex functions then for $k=1,2,\dots,n$ we have
$$\int_a^bx^kd\mu(x)\leq\int_a^bx^kd\nu(x)$$
and 
$$\int_a^b-x^kd\mu(x)\leq\int_a^b-x^kd\nu(x)$$
giving us the equality \eqref{momenty}.
\end{remark}
\section{Incomparability}
In a series of papers by W\c{a}sowicz \cite{Wasowicz1,Wasowicz2,Wasowicz3} many inequalities between quadratures were obtained. Most of them (but not all) follow from the result of Brass and Schmeisser (and, in consequence, also from Theorem \ref{mainth}).
For example the inequalities 
 $$\mathcal{G}_2(f)\leq \mathcal{C}_3(f),
\mathcal{L}_4(f)\leq \mathcal{L}_3(f),
\mathcal{L}_5(f)\leq \mathcal{L}_3(f)$$
($\mathcal{C}_3$ means here the three points Chebyshev rule)
for $3-convex$ functions $f,$ yield a very particular cases
of \eqref{muodd},
but the inequalities
$$\mathcal{C}_3(f)\leq\mathcal{G}_3(f),
\mathcal{C}_3(f)\leq\mathcal{L}_4(f),
\mathcal{C}_3(f)\leq\mathcal{L}_5(f),$$
do not follow from Theorem \ref{mainth}.

However, in  W\c{a}sowicz's papers, besides the 
obtained inequalities one can also find negative results, stating that some quadrature operators are not comparable in the class of $n-$convex functions. We observe that in many cases it is enough to check a simple sufficient condition. The following remark is complement to Remark \ref{remnec}.
\begin{remark}
\label{ostrem}
Assume that for some measures $\mu,\nu$  the inequality 
\begin{equation}
\label{abmn}
\int_a^bfd\mu\leq\int_a^bfd\nu
\end{equation}
is satisfied by all $n-$convex functions $f:[a,b]\to\R.$
Then we have 
$$\int_a^bx^jd\mu\leq\int_a^bx^jd\nu,j=1,\dots,n$$
and 
\begin{equation}
\label{n=}
\int_a^bx^{n+1}d\mu\neq\int_a^bx^{n+1}d\nu.
\end{equation}
In view of Remark \ref{remnec} it is enough to show 
the condition \eqref{n=}. 
The proof of \eqref{n=} relies again on Theorem \ref{BP}.
Assume that 
$$\int_a^bx^{n+1}d\mu=\int_a^bx^{n+1}d\nu$$
and let $l$ be the first number for which we have 
$$\int_a^bx^{l+1}d\mu\neq\int_a^bx^{l+1}d\nu.$$
Then on account of Theorem \ref{BP} every function satisfying 
\eqref{abmn} is $l-$convex. This gives a contradiction with the fact that \eqref{abmn} is satisfied by every $n-$convex function.
\end{remark}
Using Remark \ref{ostrem},  it is possible to 
prove all results from \cite{Wasowicz1,Wasowicz2,Wasowicz3} stating 
that some quadratures are incomparable in respective classes of 
functions. For example, in Theorem 2.4 \cite{Wasowicz2}   it is observed that 
$\mathcal{G}_3$ is incomparable in the class of $3-$convex functions
with $\mathcal{L}_4.$ The reason why these operators are incomparable is that 
they have five first moments equal, instead of three. Of course many similar incomparabilities follow from 
Remark \ref{ostrem}. For instance, for all $k,l$ such that $k\geq 3,l\geq 4$ $\mathcal{G}_k$ is incomparable with $\mathcal{L}_l,$ in the class of $3-$convex functions.

The equality of exactly $n$ first moments is of course only a necessary condition for $n-$convex ordering. The sufficient condition is discussed in details in \cite{SzostokCPAA}.

\end{document}